\titlespacing{\subsection}{1pt}{*0}{*0}
\newcommand{\BlackBox}{\rule{1.5ex}{1.5ex}}  
\newenvironment{proof}{\par\noindent{\bf Proof\ }}{\hfill\BlackBox\\[2mm]}
\newtheorem{theorem}{Theorem}[section]
\newtheorem{lemma}{Lemma}[section]
\newtheorem{definition}{Definition}[section]
\def\R{\mathbb{R}}
\def\Eps{\mathcal{E}}
\def\1{\mathbf{1}}
\def\X{\mathcal{X}}
\def\L{\mathcal{L}}
\DeclareMathOperator*{\argmin}{arg\,min}
\DeclareMathOperator*{\argmax}{arg\,max}
\def \R{\mathbb R}
\title{Online Continuous DR-Submodular Maximization with Long-Term Budget Constraints}
\author{%
  Omid Sadeghi\\
University of Washington\\
Seattle, WA 98195\\
\texttt{omids@uw.edu} \\
\And
Maryam Fazel \\
University of Washington\\
Seattle, WA 98195 \\
\texttt{mfazel@uw.edu}\\
}
\begin{document}

\maketitle

\begin{abstract}
	In this paper, we study a class of online optimization problems with long-term budget constraints where the objective functions are not necessarily concave (nor convex) but they instead satisfy the Diminishing Returns (DR) property. Specifically, a sequence of monotone DR-submodular objective functions $\{f_t(x)\}_{t=1}^T$ and monotone linear budget functions $\{\langle p_t,x \rangle \}_{t=1}^T$ arrive over time and assuming a total targeted budget $B_T$, the goal is to choose points $x_t$ at each time $t\in\{1,\dots,T\}$, without knowing $f_t$ and $p_t$ on that step, to achieve sub-linear regret bound while the total budget violation $\sum_{t=1}^T \langle p_t,x_t \rangle -B_T$ is sub-linear as well. Prior work has shown that achieving sub-linear regret is impossible if the budget functions are chosen adversarially. Therefore, we modify the notion of regret by comparing the agent against a $(1-\frac{1}{e})$-approximation to the best fixed decision in hindsight which satisfies the budget constraint proportionally over any window of length $W$. We propose the Online Saddle Point Hybrid Gradient (OSPHG) algorithm to solve this class of online problems. For $W=T$, we recover the aforementioned impossibility result. However, when $W=o(T)$, we show that it is possible to obtain sub-linear bounds for both the $(1-\frac{1}{e})$-regret and the total budget violation. 
\end{abstract}
\section{Introduction}
\subsection{Motivating Application: Online Ad Placement}\label{app}
Consider the following online ad placement problem: At round $t\in[T]$, an advertiser should choose an investment vector $x_t \in R_+^n$ over $n$ different websites where $i$-th entry of $x_t$ denotes the amount that the advertiser is willing to pay per each click on the ad on the $i$-th website (i.e., cost per click). In other words, each website has different tiers of ads and choosing $x_t$ corresponds to ordering a certain type of ad. The aggregate cost of investment would be determined when the number of clicks the ad receives is revealed. In other words, the cost of such an investment would be $\langle p_t, x_t\rangle$ where the $i$-th entry of the vector $p_t$ is the number of clicks the ad on the $i$-th website received. Note that the vector $p_t$ is not known ahead of time and could be adversarial. For instance, competing advertisers may click on the ad to deplete their rival's budget. The advertiser needs to balance her total investment against an allotted long-term budget (daily, monthly, etc.), i.e., $\sum_{t=1}^T \langle p_t, x_t\rangle \leq B_T$ where $B_T$ is the total targeted budget. 
At round $t\in[T]$, the advertiser's utility function $f_t (x_t)$ is a monotone DR-submodular function with respect to the vector of investments and this function quantifies the overall amount of impressions of the ads. DR-submodularity of the utility function characterizes the diminishing returns property of the impressions (Diminishing Returns (DR) property and continuous DR-submodular functions are defined in section \ref{dr} at page \pageref{dr}). In other words, making an ad more visible will attract proportionally fewer extra viewers because each website shares a portion of its visitors with other websites.\\
In this paper, we aim to propose an algorithm for this class of online optimization problems such that the algorithm has no regret, i.e., sub-linear regret bound with respect to the horizon $T$, and the total budget violation is sub-linear as well.
\subsection{Related Work}
\textbf{Online convex optimization with constraints.} Consider an online problem where at step $t\in[T]$, the player chooses $x_t \in \mathcal{X}$. Then, cost function $f_t: \mathcal{X}\rightarrow \R$ and constraint function $g_t: \mathcal{X}\rightarrow \R$ are revealed and the player incurs a loss of $f_t(x_t)$ and her budget is impacted by the amount $g_t (x_t)$. $\mathcal{X}$ is assumed to be convex and compact and the functions $f_t, g_t$ are convex for all $t\in[T]$.
The overall goal is to design an algorithm whose output is asymptotically feasible, i.e. the constraint residual $\sum_{t=1}^T g_t (x_t)$ is sub-linear, and has a sub-linear regret. 
\cite{mahdavi2012trading} considered the case where all constraint functions are equal and are given offline, i.e., $g_t (x)=g(x)~\forall t\in[T],x\in\mathcal{X}$. For this setting, they achieved $\mathcal{O}(\sqrt{T})$ regret and $\mathcal{O}(T^{\frac{2}{3}})$ constraint residual (i.e., $\sum_{t=1}^T g(x)$) bounds. \cite{jenatton2015adaptive} studied the exact same framework as \cite{mahdavi2012trading} and improved upon their result by obtaining $\mathcal{O}(T^{\max\{\beta,1-\beta\}})$ regret and $\mathcal{O}(T^{1-\frac{\beta}{2}})$ constraint residual bounds where $\beta\in(0,1)$ is a free parameter. More recently, \cite{NIPS2018_7852} considered an alternative notion of constraint residual defined as the sum of squares of clipped residuals, $\sum_{t=1}^T (\max\{g(x),0\})^2$, and achieved $\mathcal{O}(T^{\max\{\beta,1-\beta\}})$ regret and $\mathcal{O}(T^{1-\beta})$ constraint residual bounds for time-invariant constraint functions. Also, they obtained logarithmic regret bound for the case that cost functions are strongly convex. The new constraint residual form considered in \cite{NIPS2018_7852} heavily penalizes large constraint violations and strictly feasible solutions of some rounds cannot cancel out the effect of violated constraints at other rounds.\\ 
For the setting with time-varying constraints, \cite{mannor2009online} considered the notion of regret with window length $W=T$ and provided a simple counterexample with linear functions showing that the regret of any causal algorithm would be lower bounded by $\Omega(T)$. \cite{neely2017online} studied general time-varying constraint functions and assuming that there exists an action $x^* \in\mathcal{X}$ such that $g_t (x^*)<0~\forall t\in[T]$ (Slater condition), they obtained $\mathcal{O}(\sqrt{T})$ bounds for both regret with window size $W=1$ and constraint residual. However, the fixed decision benchmark action considered in this paper is constrained to be feasible for all constraint functions $g_t~\forall t\in[T]$ which heavily restricts the performance of the benchmark action and thus, the obtained regret guarantees could be loose. \cite{sun2017safety} considered the same notion of regret as \cite{neely2017online} and using online mirror descent as a subroutine (and without using the Slater condition), they obtained a similar $\mathcal{O}(\sqrt{T})$ regret bound and a looser $\mathcal{O}(T^{\frac{3}{4}})$ constraint residual bound.
\cite{icml2019} considered the exact same framework and algorithm as \cite{neely2017online}, however, they constrained the fixed decision comparator to be feasible in windows of size $W=T^\beta$ where $\beta \in[0,1)$ (as opposed to \cite{neely2017online} where $W=1$). They obtained $\mathcal{O}(\frac{WT}{V}+\sqrt{T})$ regret bound and $\mathcal{O}(\sqrt{VT})$ residual bound where $V\in [W,T)$ is a tunable parameter. Note that for $W=1$, this bound does not achieve the $\mathcal{O}(\sqrt{T})$ regret and constraint residual bound of \cite{neely2017online}. \cite{chen2017online,cao2018online} considered an alternative dynamic notion of regret defined as $R_T = \sum_{t=1}^T f_t (x_t)-\min_{\{\tilde{x}_t\in \mathcal{X}:~g_t(\tilde{x}_t)\leq 0~\forall t\in[T]\}} \sum_{t=1}^T f_t (\tilde{x}_t)$. Provided that the drift of the benchmark sequence, i.e., $\sum_{t=2}^T \|x^*_{t-1}-x^*_t\|_2$, is sub-linear in $T$, they achieved sub-linear regret and constraint residual bounds. However, the required assumption on the benchmark sequence having a sub-linear drift is really strong and is difficult to guarantee.\\
Note that in all these works, the objective functions are assumed to be convex. In contrast, we consider a more general class of non-convex/non-concave continuous DR-submodular objective functions to which the aforementioned results are not applicable.\\
\textbf{Online submodular maximization.} An orthogonal research direction considers the following problem: At step $t\in \{1,\dots,T\}$, the online algorithm chooses a feasible point $x_t \in \mathcal{P}$. Once the algorithm commits to this choice, a monotone continuous DR-submodular function $f_t$ is revealed and the reward $f_t (x_t)$ is received. The goal is to minimize the regret defined as the difference between the total reward obtained by the algorithm and that of the $(1-\frac{1}{e})$ approximation to the best fixed decision in hindsight with $(1-\frac{1}{e})$ being the optimal approximation ratio for an offline monotone continuous DR-submodular maximization problem. Note that although similar to our framework (the objective functions are assumed to be continuous DR-submodular in this setting), there are no time-varying constraints arriving online and therefore, they do not deal with the considerable complication of bounding the constraint residual.\\
The meta-algorithm for online submodular maximization problem is as follows:
\begin{algorithm}[H]
	\caption{Online submodular maximization meta-algorithm \cite{online-Chen}}
	\begin{algorithmic}
		\STATE Input: $\mathcal{P}$ is a convex set and $T$ is the horizon
		\STATE Output: $\{x_t :1\leq t\leq T\}$
		\STATE Choose an off-the-shelf online linear maximization algorithm and initialize $K$ instances $\mathcal{E}_k~\forall k\in[1,K]$ of it for online maximization of linear utility functions over $\mathcal{P}$
		\FOR{$t=1$ {\bfseries to} $T$}
		\STATE Set $x_t(1)=0$.
		\FOR{$k=1$ {\bfseries to} $K$}
		\STATE Let $v_t (k)$ be the vector selected by $\mathcal{E}_k$
		\STATE $x_t(k+1)=x_t(k)+\frac{1}{K}v_t(k)$
		\ENDFOR
		\STATE Play $x_t=x_t(K+1)$, observe the function $f_t$ and the reward $f_t (x_t)$
		\STATE Feedback $\langle v_t (k),\nabla f_t (x_t (k))\rangle$ as the payoff to be received by $\mathcal{E}_k$
		\ENDFOR
	\end{algorithmic}
\end{algorithm}
For each instance $\mathcal{E}_k~\forall k\in[1,K]$, at round $t\in[T]$, the algorithm chooses the point $v_t (k)$ and then receives the linear reward $\langle v_t (k), \nabla f_t \big(\frac{1}{K}\sum_{s=1}^{k-1}v_t (s)\big) \rangle$. \cite{online-Golovin} considered the case that the continuous DR-submodular function $f_t$ is the multilinear extension of a discrete submodular function and $\mathcal{P}$ is the matroid polytope. Using the Perturbed Follow the Leader (PFTL) as the online algorithm, they achieved an $O(\sqrt{T})$ $(1-\frac{1}{e})$-regret bound. \cite{online-Chen} used Regularized Follow The Leader (RFTL) online algorithm and achieved an $O(\sqrt{T})$ $(1-\frac{1}{e})$-regret bound for general continuous DR-submodular functions. In \cite{stochasticonline-Chen}, they further generalized their result and developed a projection-free algorithm which only requires stochastic gradient estimates and achieves a similar regret bound. See \cite{survey-Krause} for a detailed overview of online maximization of submodular functions.\\
\subsection{Contributions}
In this paper, we aim to design an algorithm for online continuous DR-submodular maximization problem with long-term budget constraints to achieve sub-linear regret and budget violation bounds simultaneously. Specifically, we make the following contributions:
\begin{itemize}
	\item We introduce the online continuous DR-submodular maximization problem with long-term budget constraints. The online ad placement example mentioned in section \ref{app} is an application of this framework.
	\item We propose the Online Saddle Point Hybrid Gradient (OSPHG) algorithm to solve this class of online problems. Our algorithm is inspired by that of \cite{sun2017safety} and \cite{online-Chen}. We consider a refined notion of static regret where the agent's utility is compared against a $(1-\frac{1}{e})$ approximation to the best fixed decision in hindsight which satisfies the budget constraint proportionally over any window of length $W$. For $W=T$, we recover the known impossibility result obtained by \cite{mannor2009online}. However, for $W=o(T)$, we obtain sub-linear bounds for both the $(1-\frac{1}{e})$-regret and the total budget violation. In particular, if $W=T^{1-\epsilon}~0< \epsilon\leq 1$, we obtain a $(1-\frac{1}{e})$-regret bound $\mathcal{O}(T^{1-\frac{\epsilon}{2}})$ while the total budget violation is $\mathcal{O}(T^{1-\frac{\epsilon}{4}})$.
\end{itemize}
Finally, we validate our theoretical results through conducting numerical experiments for a class of non-convex/non-concave continuous DR-submodular objective functions.
\section{Preliminaries}
\subsection{Notation}
We will use $[T]$ to denote the set $\{1,2,\dots,T\}$. For a vector $u\in\mathbb{R}^n$, we define $[u]_+ :=\max\{u,0\}$ and $[u]_- :=\min\{u,0\}$. The inner product of two vectors $x,y\in\mathbb{R}^n$ is denoted by either $\langle x, y \rangle$ or $x^T y$. Also, for two vectors $x,y\in \mathbb{R}^n$, $x\preceq y$ implies that $x_i \leq y_i~\forall i\in[n]$. For a vector $x\in \R^n$, we use $\|x\|$ to denote the Euclidean norm of $x$. For a convex set $\X$, we will use $\mathcal{P}_{\X}(y)=\argmin_{x\in \X}\|x-y\|$ to denote the projection onto set $\X$.
\subsection{Diminishing Returns (DR) property}\label{dr}
\begin{definition}\label{def:dr}
	A differentiable function $F:K \rightarrow \mathbb{R}$, $K\subset \mathbb{R}_+^n$, satisfies the Diminishing Returns (DR) property if:
	\begin{equation}
	x\succeq y \Rightarrow \nabla F(x) \preceq \nabla F(y)\nonumber
	\end{equation}
	In other words, $\nabla F$ is an anti-tone mapping from $\mathbb{R}^n$ to $\mathbb{R}^n$.\\
	If $F$ is twice differentiable, DR property is equivalent to the Hessian matrix being element-wise non-positive. Note that for $n=1$, the DR property is equivalent to concavity. However, for $n>1$, concavity implies negative semi-definiteness of the Hessian matrix which is not equivalent to the Hessian matrix being element-wise non-positive.
\end{definition}
A similar property is introduced in \cite{swm-Vondrak} and \cite{dr-Bian} as well and functions satisfying this property are called ``smooth submodular'' and ``DR-submodular'' there respectively. Additionally, \cite{online-Eghbali} defined the DR property for concave functions with respect to a partial ordering induced by a cone and showed that by taking the cone to be $\mathbb{R}_+^n$, Definition \ref{def:dr} is recovered and if the cone of positive semi-definite matrices is considered, the DR property generalizes to matrix ordering as well \cite{onlinematrix-Eghbali}. \cite{dr-Bian} showed that DR-submodular functions are concave along any non-negative direction, and any non-positive direction. In other words, for a DR-submodular function $F$, if $t\geq 0$ and $v\in \mathbb{R}^n$ satisfies $v\succeq 0$ or $v\preceq 0$, we have:
\begin{equation*}
F(x+tv)\leq F(x)+t\langle \nabla F(x),v\rangle 
\end{equation*}
\subsection{Examples of continuous non-concave DR-submodular functions}\label{examples}
\textbf{Multilinear extension of discrete submodular functions. \cite{multilinearmax-calinescu}} A discrete function $f:\{0,1\}^V\rightarrow \mathbb{R}$ is submodular if for all $j\in V$ and $A\subseteq B\subseteq V\setminus\{j\}$, the following holds:
\begin{equation*}
f(A\cup\{j\})-f(A)\geq f(B\cup\{j\})-f(B)
\end{equation*} 
The multilinear extension $F:[0,1]^V \rightarrow \mathbb{R}$ of $f$ is defined as:
\begin{equation*}
F(x)=\sum_{S\subset V}f(S)\prod_{i\in S}x_i \prod_{j\notin S}(1-x_j)=\mathbb{E}_{S\sim x}[f(S)]
\end{equation*}
Multilinear extensions are extensively used for maximizing their corresponding submodular set function and are known to be a special case of non-concave DR-submodular functions. The Hessian matrix of this class of functions has non-positive off-diagonal entries and all its diagonal entries are zero. It has been shown that for a large class of submodular set functions, their multilinear extension could be efficiently computed (see \cite{sub-Iyer,drapp-Bian} for examples and details).\\
\textbf{Non-convex/non-concave quadratic functions.} Consider the quadratic function $F(x)=\frac{1}{2}x^T Hx+h^T x+c$. If the matrix $H$ is element-wise non-positive, $F$ would be a DR-submodular function. We use this class of non-concave DR-submodular functions for the numerical experiments.\\
See \cite{dr-Bian,dralg-Bian} for more examples of continuous DR-submodular objective functions.
\section{Problem Statement} 
The overall offline optimization problem is the following:
\begin{equation}\label{prob}
\begin{array}{ll}
\mbox{maximize}_{x_t \in \X}& \sum_{t=1}^T f_t (x_t)\\
\mbox{subject to}& \sum_{t=1}^T g_t (x_t)\leq 0\\
\end{array}
\end{equation}
The online framework is as follows: At step $t\in[T]$, the player chooses $x_t \in \mathcal{X}$. Then, utility function $f_t: \mathcal{X}\rightarrow \R$ and constraint function $g_t: \mathcal{X}\rightarrow \R$, where $g_t(x)=\langle p_t,x\rangle -\frac{B_T}{T}$, are revealed and the player obtains the reward $f_t(x_t)$ and her budget is impacted by the amount $\langle p_t, x_t \rangle$. It is assumed that $\X\subset \R_+^n$ is convex and compact. For all $t\in[T]$, $f_t: \X\to \mathbb{R}$ is a differentiable normalized monotone continuous  DR-submodular function and $g_t: \X \to \mathbb{R}$ is linear and monotone, i.e., $p_t\succeq 0$.\\
\subsection{Performance Metric}
In order to quantify the performance of our proposed algorithm, we first define our notion of regret and total budget violation below:
\begin{definition}[Regret Metric]
	The $(1-\frac{1}{e})$-regret is defined as:
	\begin{equation*}
	R_T = (1-\frac{1}{e})\sum_{t=1}^T f_t (x_W^*)-\sum_{t=1}^T f_t (x_t)
	\end{equation*}
	where:
	\begin{align*}
	x_W^*&=\argmax_{x\in \X_W} \sum_{t=1}^T f_t (x)\\
	\X_W &= \{x\in \X:\sum_{\tau=t}^{t+W-1}g_{\tau}(x)\leq 0,~1\leq t\leq T-W+1\}
	\end{align*}
\end{definition}
$R_T$ measures the difference between the output of the algorithm and a $(1-\frac{1}{e})$ approximation to the best fixed decision in hindsight which is feasible over all windows of length $W$. Note that very recently, \cite{icml2019} first introduced the notion of a ``$K$-benchmark'', i.e., a comparator which meets the problem's allotted budget over any window of length $K$, and used this notion for online convex problems with time-varying constraints. 
\begin{definition}[Total Budget Violation Metric]
	The total budget violation is defined as follows:
	\begin{equation*}
	C_T=\sum_{t=1}^T g_t (x_t)=\sum_{t=1}^T \langle p_t,x_t \rangle -B_T
	\end{equation*}
\end{definition}
We aim to design online algorithms which achieve sub-linear bounds for both the $(1-\frac{1}{e})$-regret $R_T$ and the budget violation $C_T$.
\subsection{Assumptions}
We make the following assumptions:
\begin{itemize}
	\item $\X \subset \R_+^n$ is a compact and convex set and it contains the origin, i.e., $0\in \X$.
	\item The bounded diameter of the compact set $\X$ is $R$, i.e., we have:
	\begin{equation*}
		{\rm diam}(\X):=\max_{x,y \in \X}\|y-x\|\leq R
	\end{equation*}
	\item Both the utility functions $f_t~\forall t\in[T]$ and constraint functions $g_t~\forall t\in[T]$ are Lipschitz continuous with parameters $\beta_f$ and $\beta_g$ respectively and $\beta=\max\{\beta_f,\beta_g\}$. In other words, for all $x,y\in \X$ and $t\in[T]$, we have:
	\begin{align*}
		|f_t (y)-f_t(x)|&\leq \beta_f \|y-x\|\\
		|g_t (y)-g_t(x)|&\leq \beta_g \|y-x\|
	\end{align*}
	Note that since $g_t$ is linear for all $t\in[T]$, $\beta_g =\max_{t\in[T]} \|p_t\|$ holds.
	\item Using previous assumptions, we have:
	\begin{align*}
	F&:=\max_{t\in[T]}\max_{x,y\in\mathcal{X}}|f_t (x)-f_t (y)|\leq \beta_f R\\
	G&:=\max_{t\in[T]}\max_{x\in\mathcal{X}}|g_t (x)|\leq \beta_g R-\frac{B_T}{T}\\
	\end{align*}
	\item For all $t\in[T]$, the utility functions $f_t$ are $L$-smooth, i.e., for all $t\in[T]$, $x\in \X$ and $u\in \mathbb{R}^n$ where $u\succeq 0$ or $u\preceq 0$, the following holds:
	\begin{equation*}
	f_t(x+u)-f_t(x)\geq \langle u, \nabla f_t (x)\rangle -\frac{L}{2}\|u\|^2\nonumber
	\end{equation*} 
\end{itemize}
\section{Online Saddle Point Hybrid Gradient (OSPHG): Algorithm and Analysis}
\subsection{Algorithm}
Consider the Online Saddle Point Hybrid Gradient (OSPHG) algorithm below:
\begin{algorithm}[H]
	\caption{Online Saddle Point Hybrid Gradient (OSPHG) algorithm}
	\begin{algorithmic}
		\STATE \textbf{Input:} $\X$ is the domain set and $T$ is the horizon, $\mu, \delta$ and $K$
		\STATE \textbf{Output:} $\{x_t :1\leq t\leq T\}$
		\STATE Initialize $K$ instances $\Eps_k~\forall k\in[K]$ of Online Gradient Ascent with step size $\mu$ for online maximization of linear functions over $\mathcal{X}$
		\STATE $\lambda_1=0$
		\FOR{$t=1$ {\bfseries to} $T$}
		\STATE $x_t^{(1)}=0$
		\FOR{$k=1$ {\bfseries to} $K$}
		\STATE Let $v_t^{(k)}$ be the output of oracle $\mathcal{E}_k$ in round $t-1$
		\STATE $x_t^{(k+1)}=x_t^{(k)}+\frac{1}{K} v_t^{(k)}$
		\ENDFOR
		\STATE Play $x_t =x_t^{(K+1)}$ and observe the function $\L_t(x_t,\lambda_t)=f_t(x_t)-\lambda_t g_t(x_t)+\frac{\delta \mu}{2}\lambda_t^2$
		\FOR{$k=1$ {\bfseries to} $K$}
		\STATE Feedback $\langle v_t^{(k)},\nabla_x \L_t (x_t^{(k)},\lambda_t)\rangle$ as the payoff to be received by $\mathcal{E}_k$
		\ENDFOR
		\STATE $\lambda_{t+1}=[\lambda_t -\mu \nabla_{\lambda}\L_t (x_t,\lambda_t)]_+$
		\ENDFOR
	\end{algorithmic}
\end{algorithm}
The OSPHG algorithm could be interpreted as running two no-regret procedures:
\begin{enumerate}
	\item $K$ instances $\Eps_k$ of Online Gradient Ascent where for each $k\in[K]$, at online step $t\in[T]$, the algorithm chooses the point $v_t^{(k)}$ and after committing to this choice, it receives a reward of $\langle v_t^{(k)},\nabla_x \L_t (x_t^{(k)},\lambda_t)\rangle$. Note that each instance $\Eps_k~\forall k\in[K]$ corresponds to an online linear maximization problem. The update for $v_{t+1}^{(k)}$ is as follows:
	\begin{equation*}
		v_{t+1}^{(k)}=\mathcal{P}_{\X}\big(v_{t}^{(k)}+\mu \nabla_x \L_t (x_t^{(k)},\lambda_t)\big)
	\end{equation*}
	where $\mathcal{P}_{\X}$ is the projection onto set $\X$. Note that in our applications, the domain set $\X$ is usually a box constraint or the simplex and therefore, projection on $\X$ can be efficiently computed.
	\item Online Gradient Descent for the sequence of losses $\{\L_t(x_t,\lambda)\}_{t=1}^T$ where at each online step $t\in[T]$, the algorithm chooses $\lambda_t\geq 0$ and then, observes the loss $-\lambda_t g_t(x_t)+\frac{\delta \mu}{2}\lambda_t^2$. Note that this is an online quadratic minimization problem. 
\end{enumerate}
Therefore, the OSPHG algorithm is in fact solving an online saddle point problem and hence the name. It is noteworthy that although we used Online Gradient Descent/Ascent as subroutines in the OSPHG algorithm, \textit{any} other off-the-shelf no-regret online optimization algorithm (such as Online Mirror Descent, Regularized Follow the Leader, etc.) could have been used instead and similar bounds would have been derived. Potential advantages of any such no-regret algorithm over the other could indeed be an interesting research direction.\\ 
If for all $t\in[T]$, $g_t(x)$ were available offline, instead of running the Online Gradient Descent for updating $\lambda$ at each step, we could have minimized $\L_t(x_t,\lambda)$ with respect to $\lambda$ to obtain $\min_{\lambda} \L_t(x_t,\lambda)=f_t(x_t)-\frac{\delta \mu}{2}g_t^2(x_t)$ which is similar to the quadratic penalty function \cite{nocedal2006numerical}.\\
\subsection{Analysis}
In order to prove the regret and budget violation bounds, we first provide Lemma \ref{l1}, \ref{l2} and \ref{ll3}.
\begin{lemma}\label{l1}
	For all $t\in[T]$, the following holds:
	\begin{equation*}
	\mu\sum_{s=1}^t (1-\delta \mu^2)^{t-s} g_s (x_s)\leq \lambda_{t+1} \leq \mu\sum_{s=1}^t (1-\delta \mu^2)^{t-s} |g_s (x_s)|
	\end{equation*}
\end{lemma}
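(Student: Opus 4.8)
The plan is to first put the dual update into closed recursive form and then prove both inequalities simultaneously by a single induction on $t$. Since $\L_t(x,\lambda) = f_t(x) - \lambda g_t(x) + \frac{\delta\mu}{2}\lambda^2$, we have $\nabla_{\lambda}\L_t(x_t,\lambda_t) = -g_t(x_t) + \delta\mu\lambda_t$, so the update $\lambda_{t+1} = [\lambda_t - \mu\nabla_{\lambda}\L_t(x_t,\lambda_t)]_+$ becomes
\[
\lambda_{t+1} = \big[(1-\delta\mu^2)\lambda_t + \mu g_t(x_t)\big]_+ .
\]
Throughout I will use two elementary facts: $\lambda_t \ge 0$ for all $t$ (immediate from $\lambda_1 = 0$ and the projection $[\cdot]_+$), and $0 \le 1-\delta\mu^2 \le 1$, which holds for the parameter choices of $\mu,\delta$ used in the sequel. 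The base case $t=1$ is immediate: $\lambda_2 = [\mu g_1(x_1)]_+ = \max\{\mu g_1(x_1),0\}$, which lies between $\mu g_1(x_1)$ and $\mu|g_1(x_1)|$ since $\mu>0$; this is exactly the claim for $t=1$, the sum consisting of the single term $s=1$ with weight $(1-\delta\mu^2)^0=1$.

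For the inductive step, assume both inequalities hold at $t$. For the lower bound, drop the projection via $[y]_+ \ge y$ and then multiply the inductive lower bound by $1-\delta\mu^2 \ge 0$:
\[
\lambda_{t+2} \;\ge\; (1-\delta\mu^2)\lambda_{t+1} + \mu g_{t+1}(x_{t+1}) \;\ge\; \mu\sum_{s=1}^{t}(1-\delta\mu^2)^{t+1-s}g_s(x_s) + \mu g_{t+1}(x_{t+1}),
\]
and the right-hand side is $\mu\sum_{s=1}^{t+1}(1-\delta\mu^2)^{t+1-s}g_s(x_s)$. For the upper bound, use $g_{t+1}(x_{t+1}) \le |g_{t+1}(x_{t+1})|$; then $(1-\delta\mu^2)\lambda_{t+1} + \mu|g_{t+1}(x_{t+1})| \ge 0$ so the projection can be dropped, and applying the inductive upper bound together with $1-\delta\mu^2\ge 0$ gives
\[
\lambda_{t+2} \;\le\; (1-\delta\mu^2)\lambda_{t+1} + \mu|g_{t+1}(x_{t+1})| \;\le\; \mu\sum_{s=1}^{t}(1-\delta\mu^2)^{t+1-s}|g_s(x_s)| + \mu|g_{t+1}(x_{t+1})|,
\]
whose right-hand side is $\mu\sum_{s=1}^{t+1}(1-\delta\mu^2)^{t+1-s}|g_s(x_s)|$. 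This closes the induction and proves both bounds for all $t\in[T]$.

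The only point requiring care — and the main (minor) obstacle — is the sign of $1-\delta\mu^2$: both the step that scales the inductive hypothesis by $1-\delta\mu^2$ and the step that removes $[\cdot]_+$ in the upper bound rely on $1-\delta\mu^2 \ge 0$ (and the lower-bound manipulation additionally on $\lambda_{t+1}\ge 0$). I expect this to be guaranteed by the specific choice of $\mu$ and $\delta$ when the algorithm is instantiated, so it should either be invoked from the parameter setting or recorded as a standing assumption; everything else is the routine telescoping of the geometric weights $(1-\delta\mu^2)^{t-s}$.
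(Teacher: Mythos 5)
Your proof is correct and follows essentially the same route as the paper: both rewrite the dual update as $\lambda_{t+1}=[(1-\delta\mu^2)\lambda_t+\mu g_t(x_t)]_+$ and unroll the recursion using $[y]_+\geq y$ for the lower bound and $\lambda_t\geq 0$, $1-\delta\mu^2\geq 0$ for the upper bound (the paper does the unrolling directly rather than as a formal induction, and reaches the upper bound via $[y]_+\leq|y|$ and the triangle inequality, but this is the same argument). Your remark that $1-\delta\mu^2\geq 0$ must be guaranteed by the parameter choice is well taken; the paper defers this to the later statement that $\delta\mu^2\ll 1$ under the chosen $\mu$ and $\delta$.
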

\begin{proof}
	See Appendix A for the proof.
\end{proof}
Using Lemma \ref{l1} and the inequality $1-\delta \mu^2 \leq 1$, we can conclude that for all $t\in[T]$, $\lambda_{t+1}\leq \mu t G$ holds. We will use this fact multiple times in the proofs.
\begin{lemma}\label{l2}
	For a fixed $t\in\{1,\dots,T-W+1\}$, the following holds:
	\begin{equation*}
	\sum_{\tau=0}^{W-1}\lambda_{t+\tau}g_{t+\tau}(x_W^*)\leq \lambda_t \sum_{\tau=0}^{W-1}g_{t+\tau}(x_W^*)+\frac{G^2}{2}\mu W(W-1)
	\end{equation*}
\end{lemma}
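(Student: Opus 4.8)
The plan is to compare each $\lambda_{t+\tau}$ against the value $\lambda_t$ at the start of the window and push the discrepancy into the additive error term. Writing $\lambda_{t+\tau}g_{t+\tau}(x_W^*)=\lambda_t g_{t+\tau}(x_W^*)+(\lambda_{t+\tau}-\lambda_t)g_{t+\tau}(x_W^*)$ and summing over $\tau=0,\dots,W-1$, the statement reduces to the estimate
\[
\sum_{\tau=0}^{W-1}(\lambda_{t+\tau}-\lambda_t)\,g_{t+\tau}(x_W^*)\ \le\ \tfrac{G^2}{2}\mu W(W-1),
\]
and since $x_W^*\in\X$ we have $|g_{t+\tau}(x_W^*)|\le G$, so it is enough to show $\sum_{\tau=0}^{W-1}|\lambda_{t+\tau}-\lambda_t|\le \tfrac12\mu G\,W(W-1)$.

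The core of the argument is a one-step stability bound for the dual iterate: $|\lambda_{s+1}-\lambda_s|=O(\mu G)$ for every $s$. From the algorithm, $\nabla_\lambda\L_s(x_s,\lambda_s)=\delta\mu\lambda_s-g_s(x_s)$, so $\lambda_{s+1}=\big[(1-\delta\mu^2)\lambda_s+\mu g_s(x_s)\big]_+$. Using that $x\mapsto[x]_+$ is the nonexpansive projection onto $\R_{\ge 0}$ and $\lambda_s\ge 0$ (hence $\lambda_s=[\lambda_s]_+$),
\[
|\lambda_{s+1}-\lambda_s|\ \le\ \big|\mu g_s(x_s)-\delta\mu^2\lambda_s\big|\ \le\ \mu|g_s(x_s)|+\delta\mu^2\lambda_s .
\]
Here $|g_s(x_s)|\le G$, and the quadratic-penalty term is controlled by the a priori bound on $\lambda_s$ coming from Lemma \ref{l1}: summing the geometric series in the right-hand inequality of Lemma \ref{l1} gives $\lambda_s\le \mu G\sum_{i\ge 0}(1-\delta\mu^2)^i=G/(\delta\mu)$, hence $\delta\mu^2\lambda_s\le \mu G$. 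A short case split on the sign of $g_s(x_s)$ tightens this (when $g_s(x_s)\ge 0$ the two contributions partially cancel, and when $g_s(x_s)\le 0$ one uses $\lambda_{s+1}\le(1-\delta\mu^2)\lambda_s\le\lambda_s$ together with the same a priori bound) and delivers the per-step constant needed for the stated bound.

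With the one-step estimate in hand, the triangle inequality gives $|\lambda_{t+\tau}-\lambda_t|\le\sum_{j=0}^{\tau-1}|\lambda_{t+j+1}-\lambda_{t+j}|\le \tau\mu G$, and therefore
\begin{align*}
\sum_{\tau=0}^{W-1}\lambda_{t+\tau}g_{t+\tau}(x_W^*)
&=\lambda_t\sum_{\tau=0}^{W-1}g_{t+\tau}(x_W^*)+\sum_{\tau=0}^{W-1}(\lambda_{t+\tau}-\lambda_t)g_{t+\tau}(x_W^*)\\
&\le \lambda_t\sum_{\tau=0}^{W-1}g_{t+\tau}(x_W^*)+\mu G^2\sum_{\tau=0}^{W-1}\tau,
\end{align*}
and $\sum_{\tau=0}^{W-1}\tau=\tfrac{W(W-1)}{2}$ closes the proof. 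The only genuine obstacle is the one-step dual stability bound: the projection makes the upward move $\lambda_{s+1}-\lambda_s\le \mu G$ immediate, but the downward move has to absorb the regularization term $\delta\mu^2\lambda_s$, and this is precisely what the uniform bound $\lambda_s\le G/(\delta\mu)$ from Lemma \ref{l1} is there to handle; the rest is the decomposition and summing an arithmetic progression.
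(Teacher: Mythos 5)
Your overall strategy --- freeze the dual variable at the start of the window, control the drift $|\lambda_{t+\tau}-\lambda_t|$ by a per-step stability estimate, and sum the arithmetic progression --- is reasonable, and your reduction to $\sum_{\tau=0}^{W-1}|\lambda_{t+\tau}-\lambda_t|\le \tfrac12\mu G\,W(W-1)$ as well as the a priori bound $\lambda_s\le G/(\delta\mu)$ are both correct. The gap is precisely at the step you yourself identify as ``the only genuine obstacle'' and then dispose of with an unspecified ``short case split'': the one-step bound $|\lambda_{s+1}-\lambda_s|\le\mu G$ does not follow from your ingredients. Nonexpansiveness gives $|\lambda_{s+1}-\lambda_s|\le|\mu g_s(x_s)-\delta\mu^2\lambda_s|$. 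When $g_s(x_s)\ge 0$ the two contributions have opposite signs and each is at most $\mu G$, so the bound holds. But when $g_s(x_s)<0$ they add up: $|\mu g_s(x_s)-\delta\mu^2\lambda_s|=\mu|g_s(x_s)|+\delta\mu^2\lambda_s$, which your estimates only bound by $2\mu G$, and this is essentially attained when $\lambda_s$ is near $G/(\delta\mu)$ and $g_s(x_s)$ near $-G$. The observation $\lambda_{s+1}\le(1-\delta\mu^2)\lambda_s\le\lambda_s$ only fixes the \emph{sign} of the downward move, not its magnitude, so it does not rescue the constant. As written, your argument therefore proves the lemma with $G^2\mu W(W-1)$ in place of $\tfrac{G^2}{2}\mu W(W-1)$ --- harmless for the asymptotics in Theorems \ref{thm1} and \ref{thm2}, but not the stated inequality, and the claim that a case split ``delivers the per-step constant needed'' is not substantiated.

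The paper avoids this doubling by not telescoping one-step moves at all. It unrolls the recursion exactly from time $t$, i.e., applies Lemma \ref{l1} anchored at $\lambda_t$, so that the regularization term appears as the multiplicative decay $(1-\delta\mu^2)^{\tau}\le 1$ on $\lambda_t$ rather than as an additive $\delta\mu^2\lambda_s$ contribution to the drift; the additive part of the drift then consists only of the terms $\mu(1-\delta\mu^2)^{\tau-i-1}g_{t+i}(x_{t+i})$, each of magnitude at most $\mu G$, which is what produces the constant $\tfrac12$. The upper and lower branches of Lemma \ref{l1} are paired with $[g_{t+\tau}(x_W^*)]_+$ and $[g_{t+\tau}(x_W^*)]_-$ respectively so that every inequality points the right way. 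If you want to keep the structure of your proof, replace the triangle-inequality telescoping of one-step moves by this exact unrolling of the update.
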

\begin{proof}
	See Appendix B for the proof.
\end{proof}
\begin{lemma}\label{ll3}
	For $\mu=\frac{R}{\beta\sqrt{WT}}$, $\delta=4\beta^2$ and any $\lambda \geq 0$, if $T$ is large enough, we have:
	\begin{align}\label{l3}
	R_T+C_T \lambda-\frac{\delta \mu}{2}T\lambda^2-\frac{\lambda^2}{\mu}&\leq (F+\beta R)(W-1)+\frac{G}{2}(G+\beta R)\mu (W-1)(T-1)\\
	&+\frac{R^2}{\mu}+(G^2+\beta^2) \mu T+\frac{G^2}{2}\mu (W-1)(T-W+1)\nonumber\\
	&+\frac{LR^2}{2K}(T-W+1)\nonumber
	\end{align}
\end{lemma}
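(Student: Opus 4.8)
The overall plan is to view OSPHG as two coupled no-regret subroutines---the $K$ Frank--Wolfe-style oracles $\Eps_k$ and the online gradient descent on $\lambda$---and to glue their guarantees through the saddle structure of $\L_t$, closing the estimate with Lemma~\ref{l1}, Lemma~\ref{l2}, and feasibility of $x_W^*$ in $\X_W$. For the primal part, fix $t$. Since $f_t$ is normalized, monotone, DR-submodular and $L$-smooth and $x_t^{(k+1)}=x_t^{(k)}+\tfrac1K v_t^{(k)}$ with $v_t^{(k)}\in\X\subset\R_+^n$ (so $\tfrac1K v_t^{(k)}\succeq0$, $\|v_t^{(k)}\|\le R$), $L$-smoothness along a non-negative direction gives $f_t(x_t^{(k+1)})-f_t(x_t^{(k)})\ge\tfrac1K\langle v_t^{(k)},\nabla f_t(x_t^{(k)})\rangle-\tfrac{LR^2}{2K^2}$. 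Writing $\nabla f_t(x_t^{(k)})=\nabla_x\L_t(x_t^{(k)},\lambda_t)+\lambda_t p_t$, using the online-gradient-ascent guarantee of $\Eps_k$ (its per-step payoff $\langle v_t^{(k)},\nabla_x\L_t(x_t^{(k)},\lambda_t)\rangle$ is within $r_t^{(k)}$ of $\langle x_W^*,\nabla_x\L_t(x_t^{(k)},\lambda_t)\rangle$, with $\sum_t r_t^{(k)}$ controlled by the usual $\tfrac{R^2}{2\mu}+\tfrac{\mu}{2}\sum_t\|\nabla_x\L_t(x_t^{(k)},\lambda_t)\|^2$ bound), and invoking the DR-submodular/monotonicity inequality $\langle x_W^*,\nabla f_t(x_t^{(k)})\rangle\ge f_t(x_W^*)-f_t(x_t^{(k)})$, I would set up the contraction $f_t(x_W^*)-f_t(x_t^{(k+1)})\le(1-\tfrac1K)\bigl(f_t(x_W^*)-f_t(x_t^{(k)})\bigr)+\tfrac1K e_t^{(k)}$, unroll it using $(1-\tfrac1K)^K\le\tfrac1e$ and $f_t(x_t^{(1)})=f_t(0)=0$, and collapse $\tfrac1K\sum_k v_t^{(k)}=x_t$ together with the affineness of $g_t$ (so $\tfrac1K\sum_k g_t(v_t^{(k)})=g_t(x_t)$) to reach
\[
(1-\tfrac1e)f_t(x_W^*)-f_t(x_t)\ \le\ \lambda_t\bigl(g_t(x_W^*)-g_t(x_t)\bigr)+\tfrac1K\sum_{k}r_t^{(k)}+\tfrac{LR^2}{2K}.
\]
Summing over $t$ and treating the last $W-1$ rounds crudely by $(1-\tfrac1e)f_t(x_W^*)-f_t(x_t)\le F$ (which feeds the $(F+\beta R)(W-1)$ term and confines the $\tfrac{LR^2}{2K}$ penalty to $T-W+1$ rounds) gives $R_T\le\sum_t\lambda_t g_t(x_W^*)-\sum_t\lambda_t g_t(x_t)+\tfrac1K\sum_k\mathrm{Reg}_k+\tfrac{LR^2}{2K}(T-W+1)$.

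\textbf{Dual stage and the purpose of $\delta=4\beta^2$.} The update $\lambda_{t+1}=[(1-\delta\mu^2)\lambda_t+\mu g_t(x_t)]_+$ is online gradient descent on $\ell_t(\lambda)=-\lambda g_t(x_t)+\tfrac{\delta\mu}{2}\lambda^2$; squaring it, discarding the projection against any $\lambda\ge0$, and telescoping yields
\[
-\sum_t\lambda_t g_t(x_t)+\lambda C_T-\tfrac{\delta\mu}{2}T\lambda^2\ \le\ \tfrac{\lambda^2}{2\mu}+\tfrac{\mu}{2}\sum_t\bigl(\ell_t'(\lambda_t)\bigr)^2-\tfrac{\delta\mu}{2}\sum_t\lambda_t^2 .
\]
The regularizer is exactly the device supplying the negative reservoir $-\tfrac{\delta\mu}{2}\sum_t\lambda_t^2$ needed to kill every $\lambda_t^2$-proportional error: those inside $(\ell_t'(\lambda_t))^2$ (harmless once $\delta\mu^2\le\tfrac12$, which holds for $T$ large since $\delta\mu^2=4R^2/(WT)$), and---more delicately---those emerging from $\tfrac{\mu}{2K}\sum_k\sum_t\|\nabla_x\L_t(x_t^{(k)},\lambda_t)\|^2=\tfrac{\mu}{2K}\sum_k\sum_t\|\nabla f_t(x_t^{(k)})-\lambda_t p_t\|^2$, where after a Young split of the cross term the $\lambda_t^2\|p_t\|^2$ and residual pieces are $\le\tfrac{\delta\mu}{2}\sum_t\lambda_t^2$ precisely because $\delta=4\beta^2\ge\beta_g^2$. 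What survives from the oracle regrets is the $\tfrac{R^2}{\mu}$ initialization term and the $O\bigl((G^2+\beta^2)\mu T\bigr)$ remainder.

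\textbf{Comparator term and assembly.} For $\sum_{t=1}^T\lambda_t g_t(x_W^*)$, Lemma~\ref{l2} gives, for each window start $t\le T-W+1$, $\sum_{\tau=0}^{W-1}\lambda_{t+\tau}g_{t+\tau}(x_W^*)\le\lambda_t\sum_{\tau=0}^{W-1}g_{t+\tau}(x_W^*)+\tfrac{G^2}{2}\mu W(W-1)$, and since $x_W^*\in\X_W$ the first right-hand term is $\le0$. Averaging these windowed bounds over all shifts, then accounting for the $O(W)$ boundary rounds covered fewer than $W$ times---estimated with $\lambda_t\le\mu tG$ (the remark following Lemma~\ref{l1}), $|g_t|\le G$ and $|f_t|\le F$---produces the $\tfrac{G^2}{2}\mu(W-1)(T-W+1)$ term, the $\tfrac{G}{2}(G+\beta R)\mu(W-1)(T-1)$ boundary term, and completes the $(F+\beta R)(W-1)$ term. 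Finally, adding $C_T\lambda-\tfrac{\delta\mu}{2}T\lambda^2-\tfrac{\lambda^2}{\mu}$ to the summed primal inequality, using the dual bound to cancel $-\sum_t\lambda_t g_t(x_t)+\lambda C_T-\tfrac{\delta\mu}{2}T\lambda^2$ (with $\tfrac{\lambda^2}{2\mu}\le\tfrac{\lambda^2}{\mu}$), and substituting the oracle-regret and window estimates gives \eqref{l3}.

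\textbf{Main obstacle.} I expect the hard part to be the final bookkeeping: making the primal cross term $-\sum_t\lambda_t g_t(x_t)$ cancel exactly against the dual regret identity---which requires handling the continuous-greedy unrolling with care, since the coefficients $(1-\tfrac1K)^{K-k}\le1$ may only be discarded against quantities of a definite sign while individual $r_t^{(k)}$ have none---while simultaneously arranging that every error proportional to $\lambda_t^2$, coming from either player's gradient-norm term, is dominated by the single reservoir $-\tfrac{\delta\mu}{2}\sum_t\lambda_t^2$; this is what forces both $\delta=4\beta^2$ and the ``$T$ large enough'' hypothesis. Keeping the $O(W)$ uncovered window-boundary rounds from being inflated by the worst-case growth $\lambda_t\lesssim\mu TG$ is the other point needing care, and carrying the exact constants through all of this is the main computational burden.
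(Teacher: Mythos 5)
Your proposal follows essentially the same route as the paper's proof: the continuous-greedy contraction from $L$-smoothness plus the DR-submodular inequality, the two online-gradient regret bounds for the $\Eps_k$ oracles and for $\lambda$, Lemma~\ref{l2} with window-feasibility of $x_W^*$ and boundary corrections via $\lambda_t\leq\mu tG$, and the choice $\delta=4\beta^2$ (with $T$ large) to absorb the $\sum_t\lambda_t^2$ error terms into the regularizer's reservoir. The steps, the source of each term on the right-hand side, and the identified difficulties all match the paper's argument, so this is correct in approach.
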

\begin{proof}
	See Appendix C for the proof.
\end{proof}
Now, we have all the required tools to prove the performance bounds of the OSPHG algorithm.
\begin{theorem}[Regret bound]\label{thm1}
	For $W=o(T)$, if we choose $\mu=\frac{R}{\beta\sqrt{WT}}=\mathcal{O}(\frac{1}{\sqrt{WT}})$ and $K=\mathcal{O}(\sqrt{\frac{T}{W}})$, the $(1-\frac{1}{e})$-regret $R_T$ satisfies the following:
	\begin{equation*}
		R_T \leq \mathcal{O}(\sqrt{WT})
	\end{equation*}
	Thus, for $W=T^{1-\epsilon}~\forall \epsilon >0$, the $(1-\frac{1}{e})$-regret of the OSPHG algorithm is $\mathcal{O}(T^{1-\frac{\epsilon}{2}})$ and hence sub-linear.  
\end{theorem}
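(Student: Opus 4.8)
The plan is to obtain Theorem \ref{thm1} directly from Lemma \ref{ll3} by making the trivial choice $\lambda = 0$. The hypotheses of the theorem ($\mu = \frac{R}{\beta\sqrt{WT}}$, $\delta = 4\beta^2$, and $T$ large enough) are precisely those of Lemma \ref{ll3}, so I would start from inequality (\ref{l3}) and set $\lambda = 0$; this collapses the left-hand side to $R_T$ alone and yields
\[
R_T \leq (F+\beta R)(W-1)+\tfrac{G}{2}(G+\beta R)\mu (W-1)(T-1)+\tfrac{R^2}{\mu}+(G^2+\beta^2)\mu T+\tfrac{G^2}{2}\mu (W-1)(T-W+1)+\tfrac{LR^2}{2K}(T-W+1).
\]
It then remains only to verify that each of the six terms on the right is $\mathcal{O}(\sqrt{WT})$ under the stated choices of $\mu$ and $K$, treating $F,G,R,\beta,L$ as constants.

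Next I would substitute $\mu = \Theta(1/\sqrt{WT})$ and bound the terms one at a time. The first term is $\mathcal{O}(W)=\mathcal{O}(\sqrt{WT})$, using that $W=o(T)$ forces $W\le T$ and hence $W\le\sqrt{WT}$. The two terms containing $\mu(W-1)(T-1)$ and $\mu(W-1)(T-W+1)$ are each $\mathcal{O}(\mu WT)=\mathcal{O}(WT/\sqrt{WT})=\mathcal{O}(\sqrt{WT})$. The term $R^2/\mu$ is $\mathcal{O}(\sqrt{WT})$ by definition of $\mu$. The term $(G^2+\beta^2)\mu T=\mathcal{O}(\mu T)=\mathcal{O}(\sqrt{T/W})$ is dominated by $\mathcal{O}(\sqrt{WT})$ since $W\ge 1$. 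Finally $\frac{LR^2}{2K}(T-W+1)=\mathcal{O}(T/K)$, and this is exactly where the freedom to pick $K$ (which does not appear in Lemma \ref{ll3}) is used: choosing $K=\Theta(\sqrt{T/W})$ makes $T/K=\Theta(\sqrt{WT})$, matching the other terms. Summing the six bounds gives $R_T\le\mathcal{O}(\sqrt{WT})$.

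To get the concrete exponent, I would then substitute $W=T^{1-\epsilon}$: since $\sqrt{WT}=\sqrt{T^{1-\epsilon}\cdot T}=T^{1-\epsilon/2}$, the $(1-\tfrac1e)$-regret is $\mathcal{O}(T^{1-\epsilon/2})$, which is strictly sub-linear for every $\epsilon>0$; correspondingly $K=\Theta(\sqrt{T/W})=\Theta(T^{\epsilon/2})$.

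I do not expect any genuine obstacle at this stage: once Lemma \ref{ll3} is available, Theorem \ref{thm1} is routine asymptotic bookkeeping. The only two points needing a moment of care are (i) checking that the additive-in-$W$ term $(F+\beta R)(W-1)$ does not dominate, which requires $W\le\sqrt{WT}$, i.e. $W\le T$, guaranteed by $W=o(T)$; and (ii) balancing $K$ so that the discretization term $\frac{LR^2}{2K}(T-W+1)$, inherited from running the Frank--Wolfe-style meta-algorithm with $K$ inner steps, is on the order of $\sqrt{WT}$ rather than being allowed to blow up or be made wastefully small. All the substantive work --- in particular controlling the dual variable $\lambda$ via Lemma \ref{l1} and handling the interaction between the primal no-regret instances $\Eps_k$ and the online gradient descent on $\lambda$ --- has already been absorbed into Lemmas \ref{l1}--\ref{ll3}.
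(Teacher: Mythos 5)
Your proposal is correct and follows exactly the paper's own argument: set $\lambda=0$ in inequality (\ref{l3}) from Lemma \ref{ll3}, substitute $\mu=\mathcal{O}(1/\sqrt{WT})$ and $K=\mathcal{O}(\sqrt{T/W})$, and verify term by term that the right-hand side is $\mathcal{O}(\sqrt{WT})$. Your write-up is in fact slightly more careful than the paper's (which only lists the dominating terms), but the route is identical.
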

\begin{proof}
	If we plug in $\lambda=0$, $\mu=\frac{R}{\beta\sqrt{WT}}=\mathcal{O}(\frac{1}{\sqrt{WT}})$ and $K=\mathcal{O}(\sqrt{\frac{T}{W}})$ in inequality \ref{l3}, the dominating terms on the right hand side of the inequality are $\frac{G}{2}(G+\beta R)\mu (W-1)(T-1)=\mathcal{O}(\sqrt{WT})$, $\frac{R^2}{\mu}=\mathcal{O}(\sqrt{WT})$, $\frac{G^2}{2}\mu (W-1)(T-W+1)=\mathcal{O}(\sqrt{WT})$ and $\frac{LR^2}{2K}(T-W+1)=\mathcal{O}(\sqrt{WT})$ and therefore, the result follows.
\end{proof}
\begin{theorem}[Budget violation bound]\label{thm2}
	For $W=o(T)$, if we choose $\mu=\frac{R}{\beta\sqrt{WT}}=\mathcal{O}(\frac{1}{\sqrt{WT}})$ and $K=\mathcal{O}(\sqrt{\frac{T}{W}})$, $C_T$ is bounded as follows:
	\begin{equation*}
	C_T \leq \mathcal{O}(W^{\frac{1}{4}}T^{\frac{3}{4}})
	\end{equation*}
	Therefore, for $W=T^{1-\epsilon}~\forall \epsilon >0$, the OSPHG algorithm achieves a sub-linear budget violation bound of $\mathcal{O}(T^{1-\frac{\epsilon}{4}})$.
\end{theorem}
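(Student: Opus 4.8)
The plan is to derive the budget violation bound by choosing the dual variable $\lambda$ in Lemma~\ref{ll3} so that the left-hand side of \eqref{l3} becomes a useful lower bound on $C_T$. Recall from Lemma~\ref{l1} together with the remark $\lambda_{t+1}\le \mu t G$ that all the $\lambda_t$ encountered by the algorithm are bounded, and from Theorem~\ref{thm1} that $R_T \le \mathcal{O}(\sqrt{WT})$. Rearranging \eqref{l3}, for any $\lambda\ge 0$ we have
\begin{equation*}
C_T\lambda \le \Big(\tfrac{\delta\mu}{2}T+\tfrac{1}{\mu}\Big)\lambda^2 - R_T + (\text{RHS of }\eqref{l3}).
\end{equation*}
Since $R_T$ can be negative, I first need a uniform \emph{lower} bound $R_T \ge -\mathcal{O}(\sqrt{WT})$; this follows because $\sum_t f_t(x_t)\le \sum_t F_t(\mathbf{something})$... more simply, $\sum_{t} f_t(x_t) \le T\cdot\max_{t}\max_{x}f_t(x)$ is $\mathcal{O}(T)$ times a constant, but to keep it $\mathcal{O}(\sqrt{WT})$ I would instead bound $-R_T = \sum_t f_t(x_t) - (1-\tfrac1e)\sum_t f_t(x_W^*)$ using the same machinery behind Lemma~\ref{ll3} run with the roles reversed, or simply note $-R_T \le \sum_t f_t(x_t) \le \mathcal{O}(T)$ and absorb that into the argument below by a more careful choice. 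The cleanest route: keep the full right-hand side $\mathrm{RHS}(W,T,\mu,K)$ from \eqref{l3}, which with the stated parameter choices is $\mathcal{O}(\sqrt{WT})$, and use $-R_T\le \sum_{t}f_t(x_t)$, which we separately bound.

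**Next I would** optimize over $\lambda\ge 0$. The inequality $C_T\lambda - c_1\lambda^2 \le c_2$ for all $\lambda\ge0$, with $c_1=\tfrac{\delta\mu}{2}T+\tfrac1\mu = \Theta(\sqrt{T/W})$ (using $\delta=4\beta^2$, $\mu=\Theta(1/\sqrt{WT})$, so $\tfrac{\delta\mu}{2}T=\Theta(\sqrt{T/W})$ and $\tfrac1\mu=\Theta(\sqrt{WT})$ — wait, these are not the same order; $\tfrac1\mu$ dominates, so $c_1=\Theta(\sqrt{WT})$) and $c_2=\mathcal{O}(\sqrt{WT})$, yields by choosing $\lambda = C_T/(2c_1)$ (valid if $C_T\ge0$; if $C_T<0$ there is nothing to prove) that $C_T^2/(4c_1)\le c_2$, hence $C_T\le 2\sqrt{c_1 c_2}$. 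With $c_1=\mathcal{O}(\sqrt{WT})$ and $c_2=\mathcal{O}(\sqrt{WT})$ this would give only $C_T=\mathcal{O}(\sqrt{WT})$, which is \emph{too strong} compared to the claimed $\mathcal{O}(W^{1/4}T^{3/4})$ — signalling that the term I dropped, namely the $-R_T$ contribution bounded crudely by $\mathcal{O}(T)$, is what actually dominates $c_2$. So the honest accounting is $c_2 = \mathcal{O}(T)$ (from $\sum_t f_t(x_t)$) and $c_1=\mathcal{O}(\sqrt{WT})$, whence $C_T\le 2\sqrt{c_1 c_2}=\mathcal{O}\!\big(\sqrt{\sqrt{WT}\cdot T}\big)=\mathcal{O}(W^{1/4}T^{3/4})$, matching the statement.

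**The main obstacle** is getting the bookkeeping of $c_1$ and $c_2$ exactly right: one must verify that $\tfrac{\delta\mu}{2}T+\tfrac1\mu$ really is $\Theta(\sqrt{WT})$ under $\mu=\tfrac{R}{\beta\sqrt{WT}}$, $\delta=4\beta^2$ (indeed $\tfrac{\delta\mu}{2}T = 2\beta R\sqrt{T/W}$ and $\tfrac1\mu=\tfrac{\beta}{R}\sqrt{WT}$, so for $W=o(T)$ the second term dominates and $c_1=\Theta(\sqrt{WT})$), and that every term on the right-hand side of \eqref{l3} with $\lambda$ replaced by the optimizing value is $\mathcal{O}(T)$ once the raw bound $\sum_t f_t(x_t)=\mathcal{O}(T)$ is folded in; the $\lambda$-dependent terms $\tfrac{\delta\mu}{2}T\lambda^2$ and $\tfrac{\lambda^2}{\mu}$ on the left are precisely what I collect into $c_1\lambda^2$. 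Then the final step is: if $C_T\le 0$ the bound holds trivially; otherwise set $\lambda^\star = C_T/(2c_1)\ge 0$, plug into the rearranged \eqref{l3}, and solve the resulting quadratic inequality $\tfrac{C_T^2}{4c_1}\le c_2$ for $C_T$. Substituting $K=\mathcal{O}(\sqrt{T/W})$ into the $\tfrac{LR^2}{2K}(T-W+1)$ term confirms it is $\mathcal{O}(\sqrt{WT})=\mathcal{O}(T)$ and hence harmless. For $W=T^{1-\epsilon}$ this gives $C_T=\mathcal{O}\big(T^{(1-\epsilon)/4}\cdot T^{3/4}\big)=\mathcal{O}(T^{1-\epsilon/4})$, which is sub-linear for every $\epsilon>0$, completing the proof.
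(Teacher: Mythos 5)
Your proposal is correct and follows essentially the same route as the paper: lower-bound $R_T\geq -FT$, assume $C_T\geq 0$, substitute the optimizing $\lambda=C_T/(2c_1)$ with $c_1=\frac{\delta\mu}{2}T+\frac{1}{\mu}$ into inequality \eqref{l3}, and solve the resulting quadratic inequality, with the $FT\cdot c_1=\mathcal{O}(W^{1/2}T^{3/2})$ term dominating. Your bookkeeping of $c_1=\Theta(\sqrt{WT})$ and $c_2=\mathcal{O}(T)$ matches the paper's accounting exactly.
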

\begin{proof}
	First, we observe that by assumption, $R_T \geq -FT$ holds.
	Assume that $C_T\geq 0$ (otherwise, we are done). Setting $\lambda=\frac{C_T}{\delta \mu T+\frac{2}{\mu}}$ in inequality \ref{l3}, we obtain:
	\begin{align*}
	\frac{C_T^2}{2\delta \mu T+\frac{4}{\mu}}&\leq FT+(F+\beta R)(W-1)+\frac{G}{2}(G+\beta R)\mu (W-1)(T-1)\\
	&+\frac{R^2}{\mu}+(G^2+\beta^2) \mu T+\frac{G^2}{2}\mu (W-1)(T-W+1)\nonumber\\
	&+\frac{LR^2}{2K}(T-W+1)\nonumber
	\end{align*}
	Plugging in $\mu=\frac{R}{\beta\sqrt{WT}}=\mathcal{O}(\frac{1}{\sqrt{WT}})$ and $K=\mathcal{O}(\sqrt{\frac{T}{W}})$ in the above inequality and multiplying both sides by $2\delta \mu T+\frac{4}{\mu}$, the dominating term on the right hand side of the inequality is $FT(2\delta \mu T+\frac{4}{\mu})=\mathcal{O}(W^{\frac{1}{2}}T^{\frac{3}{2}})$. Therefore, $C_T^2 \leq \mathcal{O}(W^{\frac{1}{2}}T^{\frac{3}{2}})$ holds. Taking the square root of both sides, we obtain the desired result. 
\end{proof}
Theorem \ref{thm1} and \ref{thm2} provide the first sub-linear regret and total budget violation bounds for the online submodular maximization problem with long-term budget constraints.
\begin{figure}[t]
	\centering
	\subfigure[]{\includegraphics[width=0.33\textwidth]{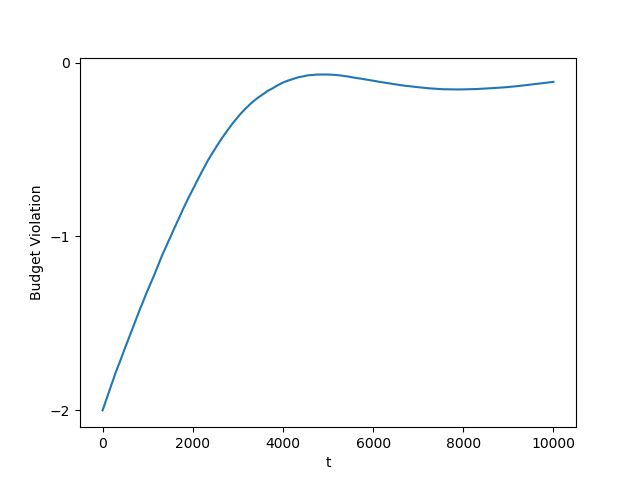}}
	\subfigure[]{\includegraphics[width=0.33\textwidth]{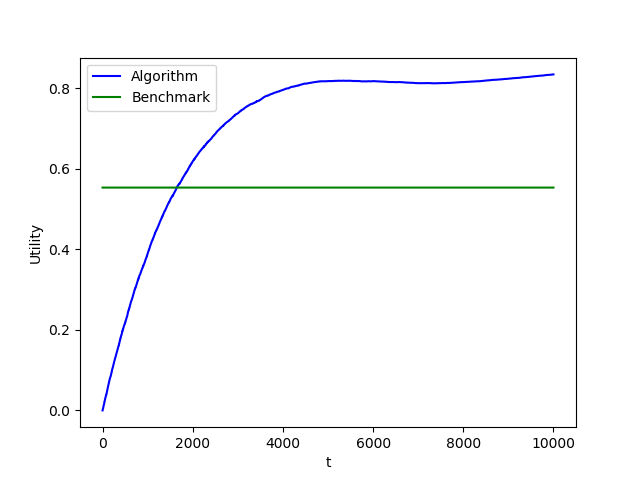}}
	\subfigure[]{\includegraphics[width=0.325\textwidth]{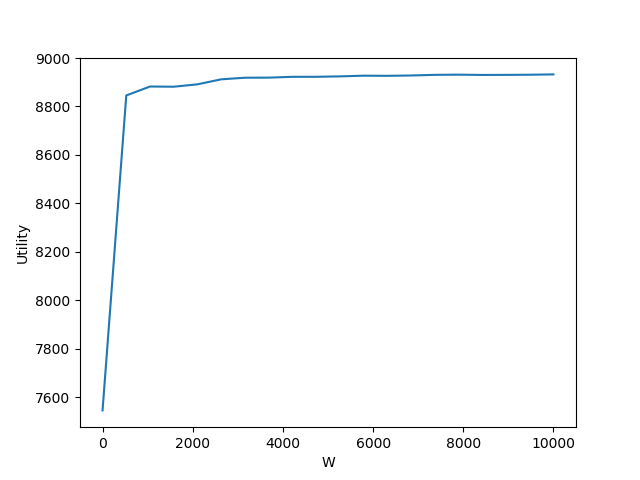}}
	\caption{(a) Budget violation running average $\frac{\sum_{\tau=1}^t g_{\tau}(x_{\tau})}{t}$ of OSPHG algorithm for $W=\sqrt{T}$ (b) Utility performance running average $\frac{\sum_{\tau=1}^t f_{\tau}(x_{\tau})}{t}$ of OSPHG algorithm for $W=\sqrt{T}$ vs. utility of the benchmark (c) Utility of the benchmark for different window lengths $1\leq W\leq T$}
	\label{fig:1}
\end{figure}
\section{Experiments}
We defined $\X=\{x\in \mathbb{R}^n:0\preceq x\preceq \bf{1}\}$ and for all $t\in[T]$, we randomly generated monotone non-convex/non-concave quadratic utility functions of the form $f_t(x)=\frac{1}{2}x^T H_tx+h_t^T x$ (see section \ref{examples}) where $H_t\in \mathbb{R}^{n\times n}$ is a random matrix with uniformly distributed non-positive entries in $[-1,0]$ and $h_t=-H_t^T \bf{1}$ to make the gradient non-negative. Therefore, the utility functions are of the form $f_t(x)=(\frac{1}{2}x-{\bf{1}})^T H_tx$. For all $t\in[T]$, we generated random linear budget functions such that $p_t$ has uniformly distributed entries in $[2,4]$. We set $T=10000$, $n=2$, $B_T=2T$ and $K=100$. We ran the OSPHG algorithm for $W=\sqrt{T}$. All codes were implemented in Python $3.7$ and the program was executed on a standard laptop computer. The running average of the budget violation and utility of the OSPHG algorithm is depicted in Figure \ref{fig:1} which verifies sub-linearity of the total budget violation and regret of our algorithm (note that the average total budget violation is negative and also, the algorithm achieves higher utilities compared to the benchmark). Additionally, we used the Frank-Wolfe variant algorithm of \cite{dr-Bian} with $K=100$ for solving offline constrained DR-submodular optimization problems to obtain the utility performance of the benchmark for different window lengths. As it could be seen in Figure \ref{fig:1}, choosing larger window sizes leads to higher utility performance for the corresponding benchmark and hence, tighter regret guarantees are obtained. However, for large enough $W$, there is merely a small difference between the obtained benchmark utility versus the case that $W=T$.
\section{Conclusion} In this paper, we studied a class of online optimization problems with long-term linear budget constraints where the utility functions are monotone continuous DR-submodular. We proposed the Online Saddle Point Hybrid Gradient (OSPHG) algorithm to solve such problems. We considered a refined notion of static regret and proved sub-linear $(1-\frac{1}{e})$-regret and budget violation bounds. Finally, we verified our theoretical findings through numerical experiments on a class of continuous DR-submodular functions. 
\appendix
\addcontentsline{toc}{section}{Appendices}
\section*{Appendices}
\section{Proof of Lemma 4.1}
Since $\lambda_{t+1}=[\lambda_t -\mu \nabla_{\lambda}\L_t(x_t,\lambda_t)]_+=[(1-\delta \mu^2)\lambda_t +\mu g_t (x_t)]_+$ and $\lambda_1=0$, we have:
\begin{align*}
	\lambda_{t+1}&\geq (1-\delta \mu^2)\lambda_t +\mu g_t (x_t)\\
	&\geq (1-\delta \mu^2)^2\lambda_{t-1} +\mu g_t (x_t)+(1-\delta \mu^2)\mu g_{t-1}(x_{t-1})\\
	&\geq \mu\sum_{s=1}^t (1-\delta \mu^2)^{t-s} g_s (x_s)+(1-\delta \mu^2)^t\underbrace{\lambda_1}_{=0}\\
	&= \mu\sum_{s=1}^t (1-\delta \mu^2)^{t-s} g_s (x_s)
\end{align*}
Similarly, we can derive the other inequality as follows:
\begin{align*}
	\lambda_{t+1}&\leq |(1-\delta \mu^2)\lambda_t +\mu g_t (x_t)|\\
	&\leq (1-\delta \mu^2)\lambda_t +\mu |g_t (x_t)|\\
	&\leq (1-\delta \mu^2)^2\lambda_{t-1} +\mu |g_t (x_t)|+(1-\delta \mu^2)\mu |g_{t-1}(x_{t-1})|\\
	&\leq \mu\sum_{s=1}^t (1-\delta \mu^2)^{t-s} |g_s (x_s)|+(1-\delta \mu^2)^t\underbrace{\lambda_1}_{=0}\\
	&= \mu\sum_{s=1}^t (1-\delta \mu^2)^{t-s} |g_s (x_s)|
\end{align*}
\section{Proof of Lemma 4.2}
Using the definition of $\lambda_{t+\tau}$, we have:
\begin{align*}
	\sum_{\tau=0}^{W-1}\lambda_{t+\tau}g_{t+\tau}(x_W^*)&=\sum_{\tau=0}^{W-1}\lambda_{t+\tau}[g_{t+\tau}(x_W^*)]_{+}+\sum_{\tau=0}^{W-1}\lambda_{t+\tau}[g_{t+\tau}(x_W^*)]_{-}\\
	&\overset{\text{(a)}}\leq \sum_{\tau=0}^{W-1}\big((1-\delta \mu^2)^{\tau}\lambda_t +\mu\sum_{i=0}^{\tau-1}(1-\delta \mu^2)^{\tau -i-1}|g_{t+i}(x_{t+i})| \big)[g_{t+\tau}(x_W^*)]_{+}\\
	&+\sum_{\tau=0}^{W-1}\big((1-\delta \mu^2)^{\tau}\lambda_t +\mu\sum_{i=0}^{\tau-1}(1-\delta \mu^2)^{\tau -i-1}g_{t+i}(x_{t+i})\big)[g_{t+\tau}(x_W^*)]_{-}\\
	&= \lambda_t \big(\sum_{\tau=0}^{W-1}(1-\delta \mu^2)^{\tau}[g_{t+\tau}(x_W^*)]_{+} +\sum_{\tau=0}^{W-1}(1-\delta \mu^2)^{\tau}[g_{t+\tau}(x_W^*)]_{-}\big)\\
	&+\mu\sum_{\tau=0}^{W-1}\big(\sum_{i=0}^{\tau-1}(1-\delta \mu^2)^{\tau-i-1}|g_{t+i}(x_{t+i})| \big)[g_{t+\tau}(x_W^*)]_{+}\\
	&+\mu\sum_{\tau=0}^{W-1}\big(\sum_{i=0}^{\tau-1}(1-\delta \mu^2)^{\tau-i-1} g_{t+i}(x_{t+i}) \big)[g_{t+\tau}(x_W^*)]_{-}\\
	&\overset{\text{(b)}}\leq \lambda_t \sum_{\tau=0}^{W-1}\underbrace{(1-\delta \mu^2)^{\tau}}_{\leq 1}g_{t+\tau}(x_W^*)+\mu\sum_{\tau=0}^{W-1}\big(\underbrace{\sum_{i=0}^{\tau-1}\underbrace{(1-\delta \mu^2)^{\tau-i-1}}_{\leq 1} |g_{t+i}(x_{t+i})|}_{\leq \tau G} \big)\underbrace{|g_{t+\tau}(x_W^*)|}_{\leq G}\\
	&\leq \lambda_t \sum_{\tau=0}^{W-1}g_{t+\tau}(x_W^*)+\frac{G^2}{2}\mu W(W-1)
\end{align*}
where (a) is due to Lemma 4.1 and (b) follows from $[g_{t+\tau}(x_W^*)]_{+}+[g_{t+\tau}(x_W^*)]_{-}=g_{t+\tau}(x_W^*)$. We will choose parameters $\delta$ and $\mu$ such that $\delta \mu^2 \ll 1$ holds.
\section{Proof of Lemma 4.3}
Fix $k\in[K]$. Using $L$-smoothness of the function $\L_t$, we have:
\begin{align*}
	\L_t (x_t^{(k+1)},\lambda_t)&\geq \L_t(x_t^{(k)},\lambda_t)+\frac{1}{K}\langle \nabla_x \L_t (x_t^{(k)},\lambda_t),v_t^{(k)}\rangle-\frac{L}{2K^2}\|v_t^{(k)}\|_2^2\\
	&\overset{\text{(a)}}\geq \L_t(x_t^{(k)},\lambda_t)+\frac{1}{K}\langle \nabla_x \L_t (x_t^{(k)},\lambda_t),v_t^{(k)}\rangle-\frac{LR^2}{2K^2}\\
	&= \L_t(x_t^{(k)},\lambda_t)+\frac{1}{K}\langle \nabla_x \L_t (x_t^{(k)},\lambda_t),v_t^{(k)}-x_W^*\rangle+\frac{1}{K}\langle \nabla_x \L_t (x_t^{(k)},\lambda_t),x_W^*\rangle-\frac{LR^2}{2K^2}\\
	&= \L_t(x_t^{(k)},\lambda_t)+\frac{1}{K}\langle \nabla_x \L_t (x_t^{(k)},\lambda_t),v_t^{(k)}-x_W^*\rangle+\frac{1}{K}\langle \nabla f_t (x_t^{(k)}),x_W^*\rangle\\
	&-\frac{1}{K}\lambda_t \langle \nabla g_t(x_t^{(k)}),x_W^*\rangle-\frac{LR^2}{2K^2}\\
	&\overset{\text{(b)}}= \L_t(x_t^{(k)},\lambda_t)+\frac{1}{K}\langle \nabla_x \L_t (x_t^{(k)},\lambda_t),v_t^{(k)}-x_W^*\rangle+\frac{1}{K}\langle \nabla f_t (x_t^{(k)}),x_W^*\rangle\\
	&-\frac{1}{K}\lambda_t g_t (x_W^*)-\frac{1}{K} \lambda_t \frac{B_T}{T}-\frac{LR^2}{2K^2}
\end{align*} 
where (a) is due to the assumption that ${\rm diam}(\X)\leq R$. Note that in order to obtain (b), we have used linearity of the budget functions for all $t\in[T]$ to write $\langle \nabla g_t(x_t^{(k)}),x_W^*\rangle=\langle p_t,x_W^*\rangle=g_t (x_W^*)+\frac{B_T}{T}$. More general assumptions such as convexity would not be enough for the proof to go through.\\
Considering that $f_t(x)$ is monotone DR-submodular for all $t\in[T]$, we can write:
\begin{align*}
	f_t (x_W^*)-f_t(x_t^{(k)}) &\overset{\text{(c)}}\leq f_t (x_W^*\vee x_t^{(k)})-f_t(x_t^{(k)})\\
	&\overset{\text{(d)}}\leq \langle \nabla f_t (x_t^{(k)}),(x_W^*\vee x_t^{(k)})-x_t^{(k)}\rangle\\
	&= \langle \nabla f_t(x_t^{(k)}),(x_W^*-x_t^{(k)})\vee 0\rangle\\
	&\overset{\text{(e)}}\leq \langle \nabla f_t(x_t^{(k)}),x_W^*\rangle
\end{align*}
where for $a,b\in \R^n$, $a\vee b$ denotes the entry-wise maximum of vectors $a$ and $b$, (c) and (e) are due to monotonocity of $f_t$ and (d) uses concavity of $f_t$ along non-negative directions.\\
Therefore, we conclude:
\begin{align*}
	\L_t (x_t^{(k+1)},\lambda_t)&\geq \L_t (x_t^{(k)},\lambda_t)+\frac{1}{K}\langle \nabla_x \L_t (x_t^{(k)},\lambda_t),v_t^{(k)}-x_W^*\rangle+\frac{1}{K}\big(f_t (x_W^*)-f_t(x_t^{(k)})\big)\\
	&-\frac{1}{K}\lambda_t g_t (x_W^*)-\frac{1}{K} \lambda_t \frac{B_T}{T}-\frac{LR^2}{2K^2}
\end{align*}
Equivalently, we can write:
\begin{align}
	\big(f_t(x_W^*)-f_t(x_t^{(k+1)})\big)&\leq (1-\frac{1}{K})\big(f_t(x_W^*)-f_t(x_t^{(k)})\big)-\lambda_t \big(g_t(x_t^{(k+1)})-g_t(x_t^{(k)})\big)+\frac{1}{K} \lambda_t g_t(x_W^*)\nonumber\\
	&+\frac{1}{K} \lambda_t \frac{B_T}{T}+\frac{LR^2}{2K^2}+\frac{1}{K}\langle \nabla \L_t(x_t^{(k)},\lambda_t),x_W^* -v_t^{(k)}\rangle\nonumber\\
	&=(1-\frac{1}{K})\big(f_t(x_W^*)-f_t(x_t^{(k)})\big)+\frac{1}{K} \big[-\lambda_t \langle p_t, v_t^{(k)}\rangle +\lambda_t g_t(x_W^*)+\lambda_t \frac{B_T}{T}\nonumber\\
	&+\frac{LR^2}{2K}+\langle \nabla \L_t(x_t^{(k)},\lambda_t),x_W^* -v_t^{(k)}\rangle\big]\label{reg1}
\end{align}
Replacing $t$ by $t+\tau$ in inequality $\ref{reg1}$ and taking the sum over $\tau\in\{0,\dots,W-1\}$ and $t\in\{1,\dots,T-W+1\}$, we obtain:
\begin{align}
	\sum_{t=1}^{T-W+1}\sum_{\tau=0}^{W-1}\big(f_{t+\tau}(x_W^*)-f_{t+\tau}(x_{t+\tau}^{(k+1)})\big)&\leq (1-\frac{1}{K})\sum_{t=1}^{T-W+1}\sum_{\tau=0}^{W-1}\big(f_{t+\tau}(x_W^*)-f_{t+\tau}(x_{t+\tau}^{(k)})\big)\nonumber\\
	&+\frac{1}{K} \sum_{t=1}^{T-W+1}\sum_{\tau=0}^{W-1}\big[-\lambda_{t+\tau} \langle p_{t+\tau}, v_{t+\tau}^{(k)}\rangle +\lambda_{t+\tau} g_{t+\tau}(x_W^*)\nonumber\\
	&+\lambda_{t+\tau} \frac{B_T}{T}+\frac{LR^2}{2K}+\langle \nabla \L_{t+\tau}(x_{t+\tau}^{(k)},\lambda_{t+\tau}),x_W^* -v_{t+\tau}^{(k)}\rangle\big]\label{reg2}
\end{align}
Applying inequality $\ref{reg2}$ recursively for all $k\in\{1,\dots,K\}$, we obtain:
\begin{align}
	\sum_{t=1}^{T-W+1}\sum_{\tau=0}^{W-1}\big(f_{t+\tau}(x_W^*)-f_{t+\tau}(\underbrace{x_{t+\tau}^{(K)}}_{=x_{t+\tau}})\big)&\leq \Pi_{k=0}^{K-1}(1-\frac{1}{K})\sum_{t=1}^{T-W+1}\sum_{\tau=0}^{W-1}\big(f_{t+\tau}(x_W^*)-f_{t+\tau}(x_{t+\tau}^{(0)})\big)\nonumber\\
	&+\sum_{k=0}^{K-1}\frac{1}{K} \Pi_{j=k+1}^{K-1}(1-\frac{1}{K}) \sum_{t=1}^{T-W+1}\sum_{\tau=0}^{W-1}\big[-\lambda_{t+\tau} \langle p_{t+\tau}, v_{t+\tau}^{(k)}\rangle \nonumber\\
	&+\lambda_{t+\tau} g_{t+\tau}(x_W^*)+\lambda_{t+\tau} \frac{B_T}{T}+\frac{LR^2}{2K}+\langle \nabla \L_{t+\tau}(x_{t+\tau}^{(k)},\lambda_{t+\tau}),x_W^* -v_{t+\tau}^{(k)}\rangle\big]\label{reg3}
\end{align}
Using the regret bound of Online Gradient Ascent instance $\Eps_k~\forall k\in[K]$, the following holds (Theorem $3.1.$ of \cite{hazan2016introduction}):
\begin{align*}
	\sum_{t=1}^T \langle \nabla_x \L_t (x_t^{(k)},\lambda_t),x_W^*-v_t^{(k)}\rangle &= \sum_{t=1}^T \langle \nabla_x \L_t (x_t^{(k)},\lambda_t),x_W^*\rangle -\sum_{t=1}^T \langle \nabla_x \L_t (x_t^{(k)},\lambda_t),v_t^{(k)}\rangle\\
	&\leq \max_x \sum_{t=1}^T \langle \nabla_x \L_t (x_t^{(k)},\lambda_t),x\rangle -\sum_{t=1}^T \langle \nabla_x \L_t (x_t^{(k)},\lambda_t),v_t^{(k)}\rangle\\
	&\leq \frac{R^2}{\mu}+\frac{\mu}{2}\sum_{t=1}^T \|\nabla_x \L_t (x_t^{(k)},\lambda_t)\|^2\\
	&= \frac{R^2}{\mu}+\frac{\mu}{2}\sum_{t=1}^T \|\nabla_x f_t (x_t^{(k)})-\lambda_t p_t\|^2\\
	&\overset{\text{(a)}}\leq \frac{R^2}{\mu}+\frac{\mu}{2}\sum_{t=1}^T \big(2\|\nabla_x f_t (x_t^{(k)})\|^2+2\lambda_t^2 \|p_t\|^2\big)\\
	&\overset{\text{(b)}}\leq \frac{R^2}{\mu}+\beta^2 \mu T+ \beta^2 \mu \sum_{t=1}^T\lambda_t^2
\end{align*}
where (a) uses the inequality $\|a+b\|^2\leq 2\|a\|^2+2\|b\|^2~\forall a,b\in \R^n$ and (b) is due to $\beta$-Lipschitzness of functions $f_t, g_t$ for all $t\in[T]$.\\
Using the inequality $(1-\frac{1}{K})^K \leq \frac{1}{e}$ in $\ref{reg3}$, we have:
\begin{align}
	\sum_{t=1}^{T-W+1}\sum_{\tau=0}^{W-1}\big(f_{t+\tau}(x_W^*)-f_{t+\tau}(x_{t+\tau})\big)&\leq \frac{1}{e}\sum_{t=1}^{T-W+1}\sum_{\tau=0}^{W-1}\big(f_{t+\tau}(x_W^*)-f_{t+\tau}(x_{t+\tau}^{(0)})\big)\nonumber\\
	&+\sum_{t=1}^{T-W+1}\sum_{\tau=0}^{W-1}\sum_{k=0}^{K-1}\frac{1}{K}\big[-\lambda_{t+\tau} \langle p_{t+\tau}, v_{t+\tau}^{(k)}\rangle +\lambda_{t+\tau} g_{t+\tau}(x_W^*)\nonumber\\
	&+\lambda_{t+\tau} \frac{B_T}{T}+\frac{LR^2}{2K}+\langle \nabla \L_{t+\tau}(x_{t+\tau}^{(k)},\lambda_{t+\tau}),x_W^* -v_{t+\tau}^{(k)}\rangle\big]\nonumber\\
	&=\frac{1}{e}\sum_{t=1}^{T-W+1}\sum_{\tau=0}^{W-1}\big(f_{t+\tau}(x_W^*)-\underbrace{f_{t+\tau}(0)}_{=0}\big)\nonumber\\
	&+\sum_{t=1}^{T-W+1}\sum_{\tau=0}^{W-1}\big[-\lambda_{t+\tau}g_{t+\tau}(x_{t+\tau})-\lambda_{t+\tau}\frac{B_T}{T} +\lambda_{t+\tau} g_{t+\tau}(x_W^*)\nonumber\\
	&+\lambda_{t+\tau} \frac{B_T}{T}+\frac{LR^2}{2K}+\sum_{k=0}^{K-1}\frac{1}{K}\langle \nabla \L_{t+\tau}(x_{t+\tau}^{(k)},\lambda_{t+\tau}),x_W^* -v_{t+\tau}^{(k)}\rangle\big]\label{reg4}
\end{align}
Rearranging the terms in $\ref{reg4}$, we obtain:
\begin{align}
	&\underbrace{\sum_{t=1}^{T-W+1}\sum_{\tau=0}^{W-1}\big((1-\frac{1}{e})f_{t+\tau}(x_W^*)-f_{t+\tau}(x_{t+\tau})\big)}_{\text{(a)}}+\underbrace{\sum_{t=1}^{T-W+1}\sum_{\tau=0}^{W-1}\lambda_{t+\tau}g_{t+\tau}(x_{t+\tau})}_{\text{(b)}}\leq\nonumber\\ &\underbrace{\sum_{t=1}^{T-W+1}\sum_{\tau=0}^{W-1}\lambda_{t+\tau} g_{t+\tau}(x_W^*)}_{\text{(c)}}+\sum_{k=0}^{K-1}\frac{1}{K}\underbrace{\sum_{t=1}^{T-W+1}\sum_{\tau=0}^{W-1}\langle \nabla L_{t+\tau}(x_{t+\tau}^{(k)},\lambda_{t+\tau}),x_W^* -v_{t+\tau}^{(k)}\rangle}_{\text{(d)}} +\frac{LR^2}{2K}W(T-W+1)\label{reg12}
\end{align}
(a) could be lower bounded as follows:
\begin{align}
	\text{(a)}&=WR_T-\sum_{i=1}^{W-1}(W-i)\big([(1-\frac{1}{e})f_{i}(x_W^*)-f_{i}(x_{i})]+[(1-\frac{1}{e})f_{T-i+1}(x_W^*)-f_{T-i+1}(x_{T-i+1})] \big)\nonumber\\
	&\geq WR_T-2F\sum_{i=1}^{W-1}(W-i)\nonumber\\
	&= WR_T-FW(W-1)\label{reg5}
\end{align}
Using Lemma 4.1 with $(1-\delta \mu^2)\leq 1$, we have:
\begin{align}
	\text{(b)}&= W\sum_{t=1}^{T}\lambda_t g_t(x_t)-\sum_{i=1}^{W-1}(W-i)\big(\lambda_i g_i (x_i)+\lambda_{T-i+1}g_{T-i+1}(x_{T-i+1})\big)\nonumber\\
	&\geq W\sum_{t=1}^{T}\lambda_t g_t(x_t)-\sum_{i=1}^{W-1}(W-i)\big(\mu(i-1)G^2+\mu(T-i)G^2\big)\nonumber\\
	&\geq W\sum_{t=1}^{T}\lambda_t g_t(x_t)-\frac{G^2}{2}\mu W(W-1)(T-1)\label{reg6}
\end{align} 
In order to bound (c), we use Lemma 4.2 and write:
\begin{align}
	\text{(c)}&\leq \sum_{t=1}^{T-W+1}\big(\lambda_t \underbrace{\sum_{\tau=0}^{W-1}g_{t+\tau}(x_W^*)}_{\leq 0}+\frac{G^2}{2}\mu W(W-1)\big)\nonumber\\
	&\leq \frac{1}{2}\mu G^2 W(W-1)(T-W+1)\label{reg9}
\end{align}
Finally, for a fixed $k\in[K]$, we can bound (d) as follows:
\begin{align}
	\text{(d)}&=W\sum_{t=1}^T \langle \nabla \L_{t}(x_{t}^{(k)}),x_W^* -v_{t}^{(k)}\rangle\nonumber\\
	&-\sum_{i=1}^{W-1}(W-i)\big([\underbrace{\langle \nabla \L_{i}(x_{i}^{(k)}),x_W^* -v_{i}^{(k)}\rangle}_{\geq -\beta R(1+\lambda_i)}]+[\underbrace{\langle \nabla \L_{T-i+1}(x_{T-i+1}^{(k)}),x_W^* -v_{T-i+1}^{(k)}\rangle}_{\geq -\beta R(1+\lambda_{T-i+1})}]\big)\nonumber\\
	&\leq\frac{R^2W}{\mu}+\beta^2 \mu TW+ \beta^2 \mu W \sum_{t=1}^T\lambda_t^2+\sum_{i=1}^{W-1}(W-i)\big(2\beta R+\beta R\underbrace{\lambda_i}_{\leq (i-1)\mu G}+\beta R\underbrace{\lambda_{T-i+1}}_{\leq (T-i)\mu G}\big)\nonumber\\
	&=\frac{R^2W}{\mu}+\beta^2 \mu TW+ \beta^2 \mu W \sum_{t=1}^T\lambda_t^2+\beta RW(W-1)+\frac{\beta RG}{2}\mu W(W-1)(T-1)\label{reg10}
\end{align}
Using the regret bound for Online Gradient Ascent (Theorem $3.1.$ of \cite{hazan2016introduction}), we have:
\begin{align}
	\sum_{t=1}^T \big(\L_t(x_t,\lambda_t)-\L_t(x_t,\lambda)\big)&=\sum_{t=1}^T\big(-\lambda_t g_t (x_t)+\frac{\delta \mu}{2}\lambda_t^2+\lambda g_t (x_t)-\frac{\delta \mu}{2}\lambda^2\big)\nonumber\\
	&\leq \frac{\lambda^2}{\mu}+\frac{\mu}{2}\sum_{t=1}^T \|\nabla_{\lambda}\L_t(x_t,\lambda_t)\|^2\nonumber\\
	&\leq \frac{\lambda^2}{\mu}+\frac{\mu}{2}\sum_{t=1}^T \big(-g_t (x_t)+\delta \mu \lambda_t\big)^2\nonumber\\
	&\overset{\text{(a)}}\leq \frac{\lambda^2}{\mu}+\frac{\mu}{2}\sum_{t=1}^T (2g_t^2 (x_t)+2\delta^2 \mu^2 \lambda_t^2)\nonumber\\
	&\leq \frac{\lambda^2}{\mu}+G^2 \mu T+\delta^2 \mu^3 \sum_{t=1}^T \lambda_t^2\label{reg11}
\end{align}
where we use $(a+b)^2 \leq 2a^2+2b^2~\forall a,b\in \R$ to derive inequality (a).\\
Combining $\ref{reg12}$, $\ref{reg5}$, $\ref{reg6}$, $\ref{reg9}$, $\ref{reg10}$ and $\ref{reg11}$, dividing both sides by $W$ and rearranging the terms, we conclude:
\begin{align*}
	R_T+C_T \lambda+\frac{\delta \mu}{2}\sum_{t=1}^T \lambda_t^2 -\frac{\delta \mu}{2}T\lambda^2-\frac{\lambda^2}{\mu}&\leq (F+\beta R)(W-1)+\frac{G}{2}(G+\beta R)\mu (W-1)(T-1)\\
	&+\frac{R^2}{\mu}+(G^2+\beta^2) \mu T+\frac{G^2}{2}\mu (W-1)(T-W+1)\\
	&+\frac{LR^2}{2K}(T-W+1)+(\delta^2 \mu^3+\beta^2 \mu)\sum_{t=1}^T \lambda_t^2\\
\end{align*}
Note that if $T$ is large enough such that $WT\geq 16R^2$ holds, we can write:
\begin{align*}
	\delta^2 \mu^2+\beta^2 &= 16\beta^4.\frac{R^2}{\beta^2 WT}+\beta^2\\
	&=\frac{16R^2}{WT}\beta^2+\beta^2\\
	&\leq 2\beta^2\\
	&=\frac{\delta}{2}
\end{align*}
Therefore, we can remove the terms $\sum_{t=1}^T \lambda_t^2$ from the inequality. Ignoring these terms, we obtain the desired result.

\end{document}